\title{Borel stability for congruence subgroups}
\author{David Bruce Cohen}
\DeclareMathOperator{\Stab}{Stab}
\DeclareMathOperator{\Ker}{ker}
\DeclareMathOperator{\SL}{SL}
\DeclareMathOperator{\ssl}{\ensuremath{\mathfrak{sl}}}
\DeclareMathOperator{\GL}{GL}
\DeclareMathOperator{\Sp}{Sp}
\DeclareMathOperator{\ESp}{ESp}
\DeclareMathOperator{\Q}{\ensuremath{\mathbb{Q}}}
\DeclareMathOperator{\Z}{\ensuremath{\mathbb{Z}}}
\DeclareMathOperator{\Mod}{mod}
\DeclareMathOperator{\aaa}{\ensuremath{\mathfrak{a}}}
\DeclareMathOperator{\CC}{C}
\DeclareMathOperator{\RC}{\tilde{C}}
\DeclareMathOperator{\HH}{H}
\DeclareMathOperator{\RH}{\tilde{H}}
\DeclareMathOperator{\Gm}{\Gamma}
\DeclareMathOperator{\sz}{\ensuremath{\sigma_{0}}}
\DeclareMathOperator{\HU}{\ensuremath{\mathcal{HU}}}
\DeclareMathOperator{\THU}{\ensuremath{\tilde{\mathcal{HU}}}}
\DeclareMathOperator{\la}{\langle}
\DeclareMathOperator{\ra}{\rangle}
\DeclareMathOperator{\EE}{\ensuremath{\mathcal{E}}}
\DeclareMathOperator{\hG}{\ensuremath{\hat{G}}}
\DeclareMathOperator{\Id}{Id}
\DeclareMathOperator{\coker}{coker}
\DeclareMathOperator{\im}{im}
\newtheorem{theorem}{Theorem}[section]
\newtheorem{lemma}[theorem]{Lemma}
\newtheorem{assumptionX}{Assumption}
\begin{document}

\maketitle

\begin{abstract}
We prove a homological stability theorem for congruence subgroups of symplectic groups.  From this theorem, we deduce a generalization of a theorem of Borel showing that certain homology groups of a congruence subgroup do not depend on the level of the congruence subgroup.
\end{abstract}

\section{Introduction}
\label{section:introduction}
A theorem of Borel \cite[7.5]{Borel} states that for a semisimple real Lie group $G$, the rational group homology $\HH_{m}(\Gm;\Q)$ is the same for any lattice $\Gm\subset G$, at least when $m$ is smaller than some constant depending on $G$.  Consider the case where $G$ is the special linear group $\SL_{n}$.  Given an integer $p\geq 2$, define the level $p$ congruence subgroup $\SL_{n}(\Z,p\Z)$ to be the kernel of the natural map $\SL_{n}(\Z)\rightarrow \SL_{n}(\Z/p\Z)$.  Then for each $m$, Borel \cite[\S 9]{Borel} showed that there exists an integer $N_{m}$ such that $\HH_{m}(\SL_{n}(\Z,p\Z);\Q)\cong \HH_{m}(\SL_{n}(\Z);\Q)$ for all $m>N_{m}$.  This result can be generalized to congruence subgroups defined over rings other than $\Z$.  Please note that throughout the rest of the paper, we only consider two-sided ideals.
\begin{definition*}
For an arbitrary ring $R$ and an ideal $\aaa\subset R$, the congruence subgroup $\GL_{n}(R,\aaa)$ is $\Ker(\GL_{n}(R)\rightarrow \GL_{n}(R/\aaa))$.  
\end{definition*}
The following natural generalization of Borel's theorem is essentially due to Charney\cite{Charney}.
\begin{theorem*}[(Borel stability for congruence subgroups) \cite{Charney}]
Suppose that $R$ is a commutative Noetherian ring of Krull dimension $k-1$ and $\aaa$ is an ideal of $R$ with $R/\aaa$ finite.  Then for $m\geq 1$ and $n\geq 2m+2k+4$, the map $\HH_{m}(\GL_{n}(R,\aaa);\Q)\rightarrow \HH_{m}(\GL_{n}(R);\Q)$ is an isomorphism.
\end{theorem*}

We will prove a version of this theorem for symplectic groups (Theorem \ref{theorem:BorelSymp}).
\begin{definition*}
Suppose that $R$ is a commutative ring and $\aaa$ an ideal of $R$.  The symplectic congruence subgroup $\Sp_{2n}(R,\aaa)$ is $\Ker(\Sp_{2n}(R)\rightarrow\Sp_{2n}(R/\aaa))$.
\end{definition*}

\begin{theorem*}[(Borel stability for congruence subgroups of symplectic groups)]
Suppose that $R$ is a commutative Noetherian ring of Krull dimension $k-2$ and $\aaa$ is an ideal of $R$ with $R/\aaa$ finite.  Then for $m\geq 1$ and $n\geq 2m+2k+4$, the map $\HH_{m}(\Sp_{2n}(R,\aaa);\Q)\rightarrow \HH_{m}(\Sp_{2n}(R);\Q)$ is an isomorphism.
\end{theorem*}

To prove our theorem, we employ the method outlined by Putman \cite[5.11]{Putman}: we first prove a homological stability result for congruence subgroups and then use the stability trick described in \cite[Lemma 5.1]{Putman2}.  A sequence of groups $\Gamma_{1}\subset \Gamma_{2}\subset \Gamma_{3}\subset \ldots$ is said to be homologically stable if the map $\HH_{m}(\Gamma_{n-1})\rightarrow \HH_{m}(\Gamma_{n})$ induced by inclusion $\Gamma_{n-1}\rightarrow \Gamma_{n}$ is an isomorphism for all $n>N_{m}$, where $N_{m}$ is a constant depending on $m$.  This property is known to hold for many natural sequences of groups including symmetric groups \cite{Nakaoka}, general linear groups over rings satisfying stable range conditions \cite{VDK}, and mapping class groups \cite{Harer}.  However, it does not hold for congruence subgroups.  Consider the case where $\Gamma_{n}$ is the congruence subgroup $\SL_{n}(\Z,p\Z)$.  Then Lee and Szczarba \cite{LandS} showed that $\HH_{1}(\Gamma_{n})\cong \ssl_{n}(\Z/p\Z)$ for $n\geq 3$, so homological stability does not hold.  However, if we take homology with rational rather than integral coefficients, then homological stability does hold, i.e., for all $m$ there exists $N_{m}$ such that $\HH_{m}(\Gamma_{n-1};\Q)\rightarrow \HH_{m}(\Gamma_{n};\Q)$ is an isomorphism for $n>N_{m}$, by the aforementioned work of Borel \cite{Borel}.  See Putman \cite{Putman3} for a description of how $\HH_{m}(\Gamma_{n};\Z/p\Z)$ changes as $n$ increases.

More generally, for $\GL_{n}(R,\aaa)$, Charney \cite{Charney} proved that with appropriate coefficients, homological stability will hold for a wide class of rings $R$, for instance any commutative Noetherian ring of finite Krull dimension.  Charney's proof, like most proofs of homological stability theorems, follows a technique introduced in the unpublished work of Quillen which involves examining the action of congruence subgroups on various simplicial complexes.  In this paper, we apply a simplified version of Charney's argument to congruence subgroups of symplectic groups.  In particular, we prove the following, as Theorem \ref{theorem:symp}.

\begin{theorem*}
Suppose that $R$ is a commutative Noetherian ring of Krull dimension $k-2$, and $\aaa$ a proper ideal of $R$.  Suppose further that the inclusion of groups $\aaa\subset R$ induces an isomorphism on homology (with coefficients in a field $M$ with trivial $R$-action):
$$\HH_{\ast}(\aaa;M)\xrightarrow{\cong}\HH_{\ast}(R;M).$$
Then for $n\geq 2m+k+4$, the map
$$\HH_{m}(\Sp_{2(n-1)}(R,\aaa);M)\rightarrow \HH_{m}(\Sp_{2n}(R,\aaa);M)$$
is a surjection, and for $n\geq 2m+k+5$, this map is an isomorphism.
\end{theorem*}

From this theorems, and the above-mentioned stability trick, we will derive our Borel stability theorem for congruence subgroups of symplectic groups.  It is worth noting that we use no analytic techniques, in contrast to Borel.  Also, our proof is not a straightforward generalization of Charney's.  In particular, the complexes considered by Charney depend on $\aaa$, and have no known symplectic analogue.  Defining such complexes and proving that they have the requisite connectivity properties would presumably be quite difficult.  Our techniques allow us to instead use complexes which have been already been studied by Mirzaii and van der Kallen \cite{MVDK}.

\paragraph{Outline.}\S \ref{section:equivhom} reviews the basic equivariant homology needed to operate Quillen's machine for proving homological stability theorems.  \S \ref{section:spec} proves a generic theorem which states that a sequence of groups enjoys homological stability if it acts on a sequence of complexes in an appropriate way.  As usual, the proof proceeds by comparing certain spectral sequences which converge to the equivariant homology of these complexes.  \S \ref{section:symp} shows that the generic theorem can be applied to congruence subgroups of symplectic groups over rings satisfying symplectic stable range conditions acting on complexes of hyperbolic pairs.  Finally \S \ref{section:BorelStab} uses our results to obtain our Borel stability results.

\paragraph{Acknowledgments}The author would like to thank Andrew Putman for his guidance and support, and Ruth Charney, Tom Church, Brendan Hassett, and an anonymous referee for their corrections.

\section{Equivariant Homology}
\label{section:equivhom}
In the next section, we will prove a theorem stating that whenever a sequence of groups acts on a sequence of (simplicial) complexes in a prescribed way, the groups enjoy homological stability.  To do this, we must extensively exploit the theory of equivariant homology, so we review this theory now (a lucid reference is \cite[\S VII.7] {Brown}).
\begin{definition*}
Suppose that a group $G$ acts on the left on a simplicial complex $X$.  Then we can form the equivariant homology of $G$ acting on $X$, denoted $\HH^{G}_{\ast}(X)$, by taking the homology of the chain complex $F_{\ast}\otimes_{G}\CC_{\ast}(X)$ where $F_{\ast}$ is a free resolution of $\Z$ over $\Z[G]$ and $\CC_{\ast}(X)$ is the simplicial chain complex of $X$ (which carries a natural left $G$ action).
\end{definition*}
Here, the operation $\otimes_{G}$ is a slightly more subtle version of tensoring over $\Z[G]$.  Given left $G$-modules $A$ and $B$, the $\Z$-module $A\otimes_{\Z}B$ can be given the diagonal left $G$-action $g\cdot (x\otimes y) = gx \otimes gy$, so we can define $A\otimes_{G} B = (A\otimes B)/G$.  Equivalently, we define a right module structure on $A$ via $x\cdot g = g^{-1}\cdot x$, and take $A\otimes_{\Z[G]}B$.  Of course, we can also compute equivariant homology with coefficients in some abelian group $M$: that is, we define $\HH^{G}_{\ast}(X;M)$ to be the homology of $F_{\ast}\otimes_{G}\CC_{\ast}(X,M)$.  Similarly, we define the reduced equivariant homology $\RH^{G}_{\ast}(X;M)$ to be the homology of $F_{\ast}\otimes_{G}\tilde{C}_{\ast}(X,M)$.

For our purposes, this construction derives its importance from the fact that it can be computed via a spectral sequence which encodes information about the homology of $G$ and its subgroups.  Thus, computing $\RH^{G}_{\ast}(X;M)$ will yield crucial implications for group homology.  The main tool used in this computation is the following lemma \cite[VII.7.3]{Brown}.

\begin{lemma}
\label{lemma:Brown}
Suppose $\RH_{q}(X;M)=0$ for $q=0,...,m$.  Then $\RH^{G}_{q}(X;M)=0$ for $q=0,...,m$.  
\end{lemma}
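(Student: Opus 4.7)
The plan is to apply the standard double-complex spectral sequence argument to $D_{p,q} = F_p \otimes_G \tilde{C}_q(X;M)$. By the definition given above, the homology of the total complex of $D$ is exactly $\RH^{G}_{\ast}(X;M)$, and two first-quadrant spectral sequences converge to it. I will use the one obtained by first taking homology in the $q$-direction (the simplicial chain direction on $X$).

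The key observation is that each $F_p$ is a free $\Z[G]$-module; after converting it to a right module by inversion as described in the paper, it remains free as a right $\Z[G]$-module. Consequently $F_p \otimes_G -$ is an exact functor, so it commutes with taking homology, giving
$$E^{1}_{p,q} = F_p \otimes_G \RH_q(X;M).$$
By hypothesis this vanishes for $0 \leq q \leq m$, so $E^{1}_{p,q} = 0$ in this range, and hence $E^{\infty}_{p,q} = 0$ for $q \leq m$ as well.

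Since $\RH^{G}_{n}(X;M)$ carries a filtration whose associated graded pieces are $E^{\infty}_{p,q}$ for $p+q=n$, and for $n \leq m$ we have $q \leq n \leq m$, every such graded piece vanishes; hence $\RH^{G}_{n}(X;M) = 0$ for $0 \leq n \leq m$, as desired. The only real technical point is the commutation of $F_p \otimes_G -$ with homology, which is routine once the freeness of $F_p$ is noted; I anticipate no serious obstacle beyond careful spectral-sequence bookkeeping.
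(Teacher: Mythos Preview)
The paper does not actually prove this lemma; it merely cites Brown \cite[VII.7.3]{Brown} and adds a remark clarifying the reduced versus unreduced statement. Your argument is the standard double-complex proof (essentially the one in Brown), and it is correct.

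One small bookkeeping point: you describe the spectral sequence as ``first-quadrant'' and in the last step write ``for $n\le m$ we have $q\le n\le m$,'' tacitly assuming $q\ge 0$. But because the \emph{reduced} chain complex is used, $\tilde{C}_{-1}(X;M)=M$ and the double complex lives in the region $p\ge 0$, $q\ge -1$. This causes no trouble: since the hypothesis forces $X$ to be nonempty (otherwise already $\RH_{-1}(X;M)=M\neq 0$ and the lemma is vacuous or false), the augmentation is surjective and $\RH_{-1}(X;M)=0$, so $E^{1}_{p,-1}=F_{p}\otimes_{G}\RH_{-1}(X;M)=0$ as well. With that extra line, every $E^{\infty}_{p,q}$ with $p+q=n\le m$ and $q\ge -1$ vanishes, and your filtration argument goes through verbatim.
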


\paragraph{Remark:} Brown states this as $\HH^{G}_{q}(X;M)=\HH_{q}(G)$, so one might have expected $\RH^{G}_{q}(X;M)$ to be $\RH_{q}(G;M)$.  However, the lemma is true as stated (the sceptical reader should verify this when $X$ is a point).  The point is that $\RH^{G}_{\ast}(X)$ is {\it not} the reduced homology of $F_{\ast}\otimes_{G} C_{\ast}(X,M)$.  We first encountered the use of $F_{\ast}\otimes_{G} \tilde{C}_{\ast}(X,M)$ in Hatcher and Wahl's paper \cite{Hatcher}[p. 26] (although they shrewdly avoid choosing any notation for $\RH^{G}$ itself).  Classical homological stability papers instead use the homology of an acyclic complex which agrees with $F_{\ast}\otimes \tilde{C}_{\ast}(X)$ for small $q$.  One could equally use the non-reduced equivariant homology, at the cost of making certain spectral sequence arguments slightly more subtle.

\paragraph{The spectral sequence.} There is a spectral sequence \cite{Brown}[VII.7.7] with
$$\EE^{1}_{pq}=\bigoplus_{\sigma^{p}\in \Sigma^{p}} \HH_{q}(\Stab_{G}\sigma^{p})$$
where $\Sigma^{p}$ is a collection of representatives of $G$-orbits of $p$-cells of $X$, and by convention there is a single $-1$-cell since $\tilde{C}_{-1}(X;M)=M$, so that $\EE_{-1q}^{1}=\HH_{q}(\Gm_{n})$.  This spectral sequence converges to
$$\RH^{G}_{\ast}(X,M).$$
If $G$ acts ``without rotations", meaning that the stabilizer of any cell of $X$ fixes the cell pointwise, then the differential $d^{1}:\EE^{1}_{pq}\rightarrow \EE^{1}_{(p-1)q}$ can be readily described \cite{Brown}[VII.8].  Suppose that $\hat{\tau}^{p-1}$ is a face of $\sigma^{p}\in \Sigma^{p}$, and $\tau^{p-1}\in \Sigma^{p-1}$ is the representative of its $G$-orbit, so we can write $\tau^{p-1}=g\hat{\tau}^{p-1}$ with $g\in G$.  Then because $G$ acts without rotations: 
$$\Stab_{G}\sigma^{p}\subset \Stab_{G}\hat{\tau}^{p-1} = g^{-1}(\Stab_{G}\tau^{p-1})g$$
We conclude that inclusion followed by conjugation induces a natural map on homology:
$$d^{1}_{\sigma\tau}:\HH_{\ast}(\Stab_{G}\sigma^{p};M)\rightarrow \HH_{\ast}(\Stab_{G}\tau^{p-1};M)$$
The differential $d^{1}:\EE^{1}_{pq}\rightarrow \EE^{1}_{(p-1)q}$ is then just given by summing all of the $d^{1}_{\sigma\tau}$ with appropriate signs.

\section{Spectral sequences}
\label{section:spec}
We will now apply Quillen's machine to prove a ``generic" homological stability theorem.  Fix an abelian group $M$.  From now on, all homology will be taken with coefficients in $M$ unless otherwise noted, i.e., $\HH_{\ast}(X)$ means $\HH_{\ast}(X;M)$.  Now suppose that we have a sequence of groups $G_{1}\subset G_{2}\subset \ldots$, and for each $i$ we have a subgroup $\Gm_{i}\lhd G_{i}$ such that $\Gm_{1}\subset \Gm_{2}\subset \ldots$.  Suppose further that there are simplicial complexes $X_{1},X_{2},\ldots$ such that $G_{i}$ acts on $X_{i}$.  Let $d$ be a positive integer.

\begin{theorem}
\label{theorem:spec}
Assume:\\
1. $G_{n}$ acts transitively on the $p$-cells of $X_{n}$ for $n>p+d$;\\
2. for all $p=0,\ldots,n$, there is a standard $p$-cell $\sigma_{0}^{p}$ in $X_{n}$ such that $\sigma_{0}^{p}$ is always a face of $\sigma_{0}^{p+1}$ and for $n>p+d$ we have $\Stab_{\Gm_{n}}\sigma_{0}^{p}=\Gm_{n-p-1}$ and $\Stab_{\Gm_{n}}\sigma_{0}^{p}$ fixes the vertices of $\sigma_{0}^{p}$.  Furthermore, $\Stab_{G_{n}}\sigma_{0}^{0}=G_{n-1}$ for $n>d+1$;\\
3. $G_{n}$ acts trivially on the image of the map $\HH_{p}(\Gm_{n-1})\rightarrow \HH_{p}(\Gm_{n})$ induced by inclusion $\Gm_{n-1}\rightarrow \Gm_{n}$ for $n>d$ and $p\geq 0$;\\
4. $X_{n}$ and $X_{n}/\Gm_{n}$ are $\frac{n-d}{2}$ acyclic;\\
5. there is some element of $G_{n}$ which flips $\sigma_{0}^{1}$ and commutes with $\Gm_{n-2}$.\\
Then the map $\HH_{m}(\Gm_{n-1})\rightarrow \HH_{m}(\Gm_{n})$ is a surjection for $n\geq 2m+d+1$ and an isomorphism for $n\geq 2m+d+2$.
\end{theorem}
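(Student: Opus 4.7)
The plan is to apply Quillen's method via the spectral sequence of $\Gm_{n}$ acting on $X_{n}$. First, combine assumption (4) with Lemma~\ref{lemma:Brown} to obtain $\RH^{\Gm_{n}}_{q}(X_{n};M)=0$ for $q\leq (n-d)/2$. Then consider the spectral sequence of Section~\ref{section:equivhom},
$$\EE^{1}_{pq}=\bigoplus_{\sigma^{p}\in \Sigma^{p}}\HH_{q}(\Stab_{\Gm_{n}}\sigma^{p};M)\quad(p\geq 0),\qquad \EE^{1}_{-1,q}=\HH_{q}(\Gm_{n};M),$$
converging to $\RH^{\Gm_{n}}_{p+q}(X_{n};M)$. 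By (2), $\Gm_{n}$ fixes each cell pointwise, hence acts without rotations, so $d^{1}$ admits the explicit description from Section~\ref{section:equivhom}.

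Next I identify the $\EE^{1}$ page. Since $\Gm_{n}\lhd G_{n}$ and $G_{n}$ is transitive on $p$-cells by (1), the $\Gm_{n}$-orbit set $\Sigma^{p}$ is a transitive $G_{n}/\Gm_{n}$-set, and by (2) each orbit representative has $\Gm_{n}$-stabilizer $G_{n}$-conjugate to $\Gm_{n-p-1}$. Hence each summand of $\EE^{1}_{pq}$ is (non-canonically) isomorphic to $\HH_{q}(\Gm_{n-p-1};M)$. Assumption (3) then implies that the composition $\HH_{q}(\Gm_{n-p-1};M)\to \HH_{q}(\Gm_{n};M)$ induced by inclusion of a representative stabilizer does not depend on the choice of representative in the $G_{n}$-orbit, so after identification each $d^{1}$ becomes a signed sum of copies of inclusion-induced stability maps in group homology.

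I then induct on $m$. By the inductive hypothesis the stability maps $\HH_{q}(\Gm_{n-p-1};M)\to \HH_{q}(\Gm_{n};M)$ are isomorphisms for $q<m$ provided $n-p\geq 2q+d+2$, which covers all entries of interest on the relevant diagonals once $n\geq 2m+d+1$. In this range $\RH^{\Gm_{n}}_{m-1}(X_{n};M)=0$, so $\EE^{\infty}_{-1,m}=0$. Under the inductive identifications, the cokernel of $d^{1}:\EE^{1}_{0,m}\to \EE^{1}_{-1,m}=\HH_{m}(\Gm_{n};M)$ is precisely the cokernel of the stability map $\HH_{m}(\Gm_{n-1};M)\to \HH_{m}(\Gm_{n};M)$, and all higher differentials landing in $\EE^{r}_{-1,m}$ ($r\geq 2$) factor through its image, because they arise from inclusions $\Gm_{n-r}\hookrightarrow \Gm_{n}$ which themselves factor through $\Gm_{n-1}$. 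This yields the claimed surjectivity for $n\geq 2m+d+1$. Injectivity at $n\geq 2m+d+2$ is obtained by the analogous analysis on the diagonal $p+q=m$, using $\RH^{\Gm_{n}}_{m}(X_{n};M)=0$ together with a parity and identification argument on $d^{1}:\EE^{1}_{1,m}\to \EE^{1}_{0,m}$ to pin down the kernel of the stability map.

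The principal obstacle is the bookkeeping on the $\EE^{1}$ page: one must verify carefully that, after the inductive identifications, the various $d^{1}$ differentials really collapse to signed sums of the stability map independently of coset representatives, and that every higher differential into $\EE^{r}_{-1,m}$ does factor through the image of the stability map. Assumption (3) is the tool that makes these identifications well-defined, assumption (1) guarantees a uniform description of orbits in each degree, and assumption (2) supplies both the stabilizer formula and the no-rotations condition needed for the explicit face formula for $d^{1}$.
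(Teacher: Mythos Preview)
Your overall setup---Quillen's machine via the equivariant spectral sequence, induction on $m$, identification of stabilizers with $\Gm_{n-p-1}$, and the observation that $d^{1}:\EE^{1}_{0,m}\to\EE^{1}_{-1,m}$ has the same image as the stability map---matches the paper. But there is a real gap in how you pass from $\EE^{\infty}_{-1,m}=0$ to $\EE^{2}_{-1,m}=0$.

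You never invoke the second half of assumption~(4), namely that $X_{n}/\Gm_{n}$ is $\tfrac{n-d}{2}$-acyclic, and this is exactly what does the work in the paper. The paper identifies $\EE^{1}_{pq}$, for $p$ in the relevant range, with $\RC_{p}(X_{n}/\Gm_{n};\HH_{q}(\Gm_{n-p-1}))$, and after the inductive stabilization of the coefficient groups the differentials $d^{1}$ become the boundary maps of the reduced chain complex of $X_{n}/\Gm_{n}$ with constant coefficients. Acyclicity of the quotient then forces $\EE^{2}_{(m-q),q}=0$ for all $q<m$, so the \emph{sources} of every higher differential hitting $\EE^{r}_{-1,m}$ already vanish on the $\EE^{2}$ page. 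The injectivity argument is the same idea one step over, showing $\EE^{2}_{(m+1-q),q}=0$.

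Your proposed substitute---that each $d^{r}:\EE^{r}_{r-1,\,m-r+1}\to\EE^{r}_{-1,m}$ ``arises from an inclusion $\Gm_{n-r}\hookrightarrow\Gm_{n}$'' and hence lands in the image of $d^{1}$---does not hold. That differential goes from a subquotient of $\HH_{m-r+1}(\Gm_{n-r})$ to a subquotient of $\HH_{m}(\Gm_{n})$; it shifts homological degree by $r-1\geq 1$ and therefore cannot be induced by any group homomorphism. Higher differentials in this spectral sequence have no such description, and without the acyclicity of $X_{n}/\Gm_{n}$ there is no mechanism to control them. The same defect undermines your injectivity sketch: the ``parity and identification argument on $d^{1}$'' cannot by itself force $\EE^{2}_{0,m}=0$; you again need the quotient's acyclicity to kill $\EE^{2}_{(m+1-q),q}$ for $q<m$.
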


Before proving this theorem, we note that a version of the third assumption was used to a similar end by Charney in \cite{Charney}[Thm. 5.2].

\begin{proof}
We proceed by induction.  When $m=0$, we have that $\HH_{m}(\Gm_{n-1})\rightarrow \HH_{m}(\Gm_{n})$ is just $M\xrightarrow{id} M$ which is clearly an isomorphism.  Now suppose that $m\geq 1$ and assume for induction that for all $q<m$, $\HH_{q}(\Gm_{p-1})\rightarrow \HH_{q}(\Gm_{p})$ is a surjection for $p\geq 2q+d+1$ and an isomorphism for $p\geq 2q+d+2$.  We must show surjectivity and injectivity of $\HH_{m}(\Gm_{n-1})\rightarrow \HH_{m}(\Gm_{n})$ for $n$ in the appropriate ranges.

\paragraph{Surjectivity.} Assume $n\geq 2m+d+1$. Recall our spectral sequence converging to $\RH^{\Gm_{n}}_{\ast}(X_{n})$.  We have
$$\EE^{1}_{pq}=\bigoplus_{\sigma^{p}\in \Sigma^{p}} \HH_{q}(\Stab_{\Gm_{n}}\sigma^{p}),$$
where $\Sigma^{p}$ is a collection of representatives of $\Gm_{n}$-orbits of $p$-cells of $X_{n}$, (again, by convention there is a single $-1$-cell, so $\EE^{1}_{-1q}=\HH_{q}(\Gm_{n})$). In order to establish that $\HH_{m}(\Gm_{n-1})\rightarrow \HH_{m}(\Gm_{n})$ is surjective, it suffices to show that $\EE^{1}_{0m}\rightarrow \EE^{1}_{-1m}$ is surjective since by assumption (3), each summand $\HH_{m}(\Stab_{\Gm_{n}}(\sigma^{0}))$ of $\EE^{1}_{0m}$ has the same image in $\EE^{1}_{-1m}$ (since $G_{n}$ acts transitively on vertices, each $\Stab_{\Gm_{n}}(\sigma^{0})$ is a conjugate of $\Gm_{n-1}$).  To show that $\EE^{1}_{0m}\rightarrow \EE^{1}_{-1m}$ is surjective, it clearly suffices to show that $\EE^{2}_{-1m}=0$.  We will do this by establishing that $\EE^{2}_{pq}$ vanishes for any pair $p,q$ which admits a differential $d^{r}:\EE^{r}_{pq}\rightarrow \EE^{r}_{-1m}$ (this is enough because $\EE^{\infty}_{-1m}=0$ by Lemma \ref{lemma:Brown}).

Observe that the collection of inclusions $\Stab(\Gm_{n})_{\sigma^{p}}\rightarrow \Gm_{n}$ as $\sigma^{p}$ ranges over $\Sigma^{p}$ induces a canonical map $\kappa_{pq}:\EE^{1}_{pq}\rightarrow \RC_{p}(X_{n}/\Gm_{n};\HH_{q}(\Gm_{n}))$ (this map does not depend on our choice of $\Gm_{n}$ orbit representatives $\Sigma^{p}$ because $\Gm_{n}$ acts trivially on its own homology).  If $n-p-1\geq 2q+d+1$, then by our inductive hypothesis, we have a commutative diagram

$$
\begin{CD}
\EE^{1}_{pq}              @<d^{1}_{(p+1)q}<<           \EE^{1}_{(p+1)q}\\
@V\kappa_{pq}VV                               @V\kappa_{(p+1)q}VV\\
\RC_{p}(X_{n}/\Gm_{n};\HH_{q}(\Gm_{n}))     @<d_{p+1}<<
                             \RC_{p+1}(X_{n}/\Gm_{n};\HH_{q}(\Gm_{n}))\\
\end{CD}
$$
\noindent
where the left arrow is an isomorphism and the right arrow is a surjection.

Now suppose that $q<m$ and $p+q=m$ (so that there is a differential from some $\EE^{r}_{pq}$ to $\EE^{r}_{-1m}$). Then
$$n-p-1\geq 2m+d+1-p-1 =m+q+d\geq 2q+d+1,$$
so we have a commutative diagram

$$
\begin{CD}
\EE^{1}_{(p-1)q} @<d^{1}_{pq}<< \EE^{1}_{pq} @<d^{1}_{(p+1)q}<< \EE^{1}_{(p+1)q}\\
@V\cong VV                      @V\cong VV   @V\kappa_{(p+1)q}VV\\
\RC_{p-1}(X_{n}/\Gm_{n};\HH_{q}(\Gm_{n}))@<d_{p}<<
    \RC_{p}(X_{n}/\Gm_{n};\HH_{q}(\Gm_{n}))@<d_{p+1}<<
         \RC_{p+1}(X_{n}/\Gm_{n};\HH_{q}(\Gm_{n}))\\
\end{CD}
$$
where the bottom row is exact by assumption (4) and the rightmost arrow is surjective.  It follows that
$$\EE^{2}_{pq}=\ker(d^{1}_{pq})/\im(d^{1}_{(p+1)q})=0.$$
By our remarks above, this suffices to show surjectivity.

\paragraph{Injectivity.} With our surjectivity result in hand, we now suppose that $n\geq 2m+d+2$. We wish to show that $\HH_{m}(\Gm_{n-1})\rightarrow \HH_{m}(\Gm_{n})$ is injective.  We first confirm that $\EE^{2}_{-1m}=0$ and $\EE^{2}_{0m}=0$.  As before, we note that $\EE^{\infty}_{-1m}=\EE^{\infty}_{0m}=0$ by lemma \ref{lemma:Brown}.  Now, if $p+q\in \{m,m+1\}$, then we see that
$$n-p-1\geq 2m+d+2-p-1\geq m+d+q \geq 2q+d+1$$
so we have that $\EE^{2}_{pq}=0$ as above.

This vanishing result implies that we have an exact sequence:

$$0\leftarrow \EE^{1}_{-1m}{\buildrel d_{0m}^{1}\over \leftarrow} \EE^{1}_{0m}{\buildrel d_{1m}^{1}\over \leftarrow} \EE^{1}_{1m}.$$

Exactness implies that the induced map $d_{0}:\coker(d_{1m}^{1})\rightarrow \EE^{1}_{-1m}=H_{m}(\Gm_{n})$ is an isomorphism.  Hence, recalling yet again that $\Stab_{\Gm_{n}}(\sigma_{0}^{0})=\Gm_{n-1}$, it suffices to show that the natural map $\phi:H_{m}(\Gm_{n-1})\rightarrow \coker(d_{1m}^{1})$ is an isomorphism (since the composition of this map with $d_{0}$ agrees with the map induced by inclusion $\Gm_{n-1}\rightarrow \Gm_{n}$).

We will first show that $\phi$ is a surjection.  Suppose that $\tau_{0}^{0},\tau_{1}^{0}$ are the vertices of some edge $\sigma^{1}$.  Then by assumptions (1) and (5), there is some $g\in G_{n}$ which flips $\sigma^{1}$ and acts trivially on the homology of $\Stab_{\Gm_{n}}(\sigma^{1})$.  The following diagram commutes, and the top arrow is an equality:

$$
\begin{CD}
\HH_{m}(\Stab_{\Gm_{n}}(\sigma^{1}))  @<g <<   \HH_{m}(\Stab_{\Gm_{n}}(\sigma^{1}))\\
@Vd_{\sigma^{1}\tau_{0}^{0}}VV                               @Vd_{\sigma^{1}\tau_{1}^{0}}VV\\
\HH_{m}(\Stab_{\Gm_{n}}(\tau_{0}^{0}))     @<g <<
 \HH_{m}(\Stab_{\Gm_{n}}(\tau_{1}^{0}))\\
\end{CD}
$$

Since $d_{1m}^{1}$ acts as $d_{\sigma^{1}\tau_{0}^{0}}-d_{\sigma^{1}\tau_{1}^{0}}$ on the $\HH_{m}(\Stab_{\Gm_{n}}(\sigma^{1}))$ summand of $\EE^{1}_{0m}$, it follows that when we form $\coker(d_{1m}^{1})$, we identify $\HH_{m}(\Stab_{\Gm_{n}}(\tau^{0}_{0}))$ and $\HH_{m}(\Stab_{\Gm_{n}}(\tau^{0}_{1}))$ by the isomorphism induced by $g$.  Since $X_{n}/\Gm_{n}$ is certainly path connected (by assumption 4,) we quickly see that $\phi$ is surjective, as any two summands of $\EE^{1}_{0m}$ will be identified together by some sequence of edges.

Now we will see that $\phi$ is an injection.  To do this, it suffices to find a map
$$\psi:\coker(d_{1m}^{1})\rightarrow \HH_{m}(\Stab_{\Gm_{n}}(\sigma_{0}^{0}))
=H_{m}(\Gm_{n-1})$$
such that $\psi\circ\phi$ is the identity.  Observe that $\Stab_{G_{n}}(\sigma_{0}^{0})=G_{n-1}$ by assumption 2, and we have already shown that $\HH_{m}(\Gm_{n-2})$ surjects onto $\HH_{m}(\Gm_{n-1})$, so it follows that $\Stab_{G_{n}}(\sigma_{0}^{0})$ acts trivially on $\HH_{m}(\Gm_{n-1})$ by assumption (3). This implies that, for a vertex $\tau^{0}$ and $g_{1},g_{2}\in G_{n}$ such that $g_{1}\tau^{0}=g_{2}\tau^{0}$, we have that $g_{1}$ and $g_{2}$ induce the same map from $\HH_{m}(\Stab_{\Gm_{n}}(\tau^{0}))$ to $\HH_{m}(\Stab_{\Gm_{n}}(g_{i}\tau^{0}))$.  We can now define $\psi$ as a function from $\EE^{1}_{0m}$ as follows: on each summand $\HH_{m}(\Stab_{\Gm_{n}}(\tau^{0}))$, take $\psi$ to be the map to $\HH_{m}(\Stab_{\Gm_{n}}(\sigma_{0}^{0}))$ induced by multiplication by $g\in G$ such that $g\tau^{0}=\sigma_{0}^{0}$.  By what we have just observed, this is well defined and vanishes on the image of $d_{1m}^{1}$.
\end{proof}

\section{Congruence subgroups of symplectic groups}
\label{section:symp}
Now we prove that the homology of $\Sp_{2n}(R,\aaa)$ (with coefficients in an untwisted field) stabilizes if $R$ satisfies a certain symplectic stable range condition.  The stable range condition $SR_{k}$, as defined below, was introduced by Bass.  If a ring $R$ satisfies this condition, then $\GL_{n}(R)$ has various properties.  (See Bass\cite{Bass} or Hahn and O'Meara\cite{HOM} for an exposition).  If we want similar properties to hold for $\Sp_{2n}(R)$, then we will need $R$ to satisfy some additional conditions.

\begin{definition*}
Given a ring $R$, we call a vector $(x_{1},\ldots,x_{k})$ unimodular if there exist $a_{1},\ldots,a_{k}\in R$ such that $a_{1}x_{1}+\ldots+a_{k}x_{k}=1$.  The ring $R$ is said to satisfy the stable range condition $SR_{k}$ if for every unimodular vector $(x_{1},\ldots,x_{k+1})$ we can find $b_{1},\ldots,b_{k}\in R$ such that $(x_{1}+b_{1}x_{k+1},x_{2}+b_{2}x_{k+1},\ldots,x_{k}+b_{k}x_{k+1})$ is unimodular.
\end{definition*}

Any commutative Noetherian ring of Krull dimension $k-1$ satisfies the stable range condition $SR_{k}$ \cite[4.1.10]{HOM}.  Bass also proves the following lemma \cite[V.3.2]{Bass}.

\begin{lemma}
\label{lemma:quotient}
If $R$ satisfies $SR_{k}$, so does $R/\aaa$.
\end{lemma}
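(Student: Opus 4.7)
The plan is to lift a unimodular $(k+1)$-tuple in $R/\aaa$ to a unimodular $(k+1)$-tuple in $R$, then apply $SR_{k}$ directly to the lift and reduce the resulting shortening relation modulo $\aaa$. This way we never have to manipulate the stable range condition in the quotient ring itself; everything nontrivial happens in $R$, where we are allowed to use $SR_{k}$ by hypothesis.

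Concretely, given a unimodular vector $(\bar{x}_{1},\ldots,\bar{x}_{k+1})$ in $R/\aaa$ witnessed by $\sum\bar{a}_{i}\bar{x}_{i}=1$, I would first choose arbitrary lifts $x_{i},a_{i}\in R$. Then $\sum a_{i}x_{i}=1-t$ for some $t\in\aaa$, so the $(k+2)$-tuple $(x_{1},\ldots,x_{k+1},t)$ is unimodular in $R$ (via the same $a_{i}$ together with a final coefficient of $1$). Invoking Bass's standard strengthening, that $SR_{k}$ in $R$ implies the analogous shortening property for unimodular tuples of every length $n+1\geq k+2$, I would apply this extension to $(x_{1},\ldots,x_{k+1},t)$ to produce $b_{i}\in R$ for which $(x_{1}+b_{1}t,\ldots,x_{k+1}+b_{k+1}t)$ is unimodular in $R$. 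Because every correction $b_{i}t$ lies in $\aaa$, this new $(k+1)$-tuple is a unimodular lift of $(\bar{x}_{1},\ldots,\bar{x}_{k+1})$.

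I would then apply $SR_{k}$ in $R$ directly to this unimodular lift, obtaining $c_{1},\ldots,c_{k}\in R$ such that the standard modification of its first $k$ entries by multiples of the last yields a unimodular $k$-tuple in $R$. Reducing the resulting identity modulo $\aaa$, the $b_{i}t$ corrections disappear and one is left with unimodularity of $(\bar{x}_{1}+\bar{c}_{1}\bar{x}_{k+1},\ldots,\bar{x}_{k}+\bar{c}_{k}\bar{x}_{k+1})$ in $R/\aaa$, which is exactly the conclusion that $R/\aaa$ satisfies $SR_{k}$.

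The main obstacle, essentially the only nonformal input, is the appeal to Bass's extension of $SR_{k}$ from length-$(k+1)$ tuples to length-$(k+2)$ tuples (without this we cannot manufacture a unimodular lift of the given tuple, since arbitrary lifts of the $x_{i}$ typically fail to be unimodular in $R$). This strengthening is a classical result proved by a short induction on the length, so I would cite \cite{Bass} for it rather than reproducing the argument here.
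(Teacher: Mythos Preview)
Your argument is correct. The paper itself does not supply a proof of this lemma at all: it simply records the statement and cites \cite[V.3.2]{Bass}. So there is no ``paper's proof'' to compare against beyond that reference.

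What you have written is essentially the classical Bass argument behind that citation: lift the given unimodular $(k+1)$-tuple over $R/\aaa$ to a unimodular $(k+1)$-tuple over $R$ (by first adjoining the defect $t\in\aaa$ to get a unimodular $(k+2)$-tuple and then shortening via $SR_{k}\Rightarrow SR_{k+1}$), apply $SR_{k}$ in $R$, and reduce modulo $\aaa$. The one nontrivial ingredient you flag---that $SR_{k}$ propagates to longer tuples---is exactly the standard lemma from \cite{Bass} that makes the lifting step work, and your instinct to cite rather than reprove it matches the paper's own level of detail. It is worth noting that the lifting step you isolate (unimodular vectors over $R/\aaa$ lift to unimodular vectors over $R$) is precisely the content the paper later extracts from \cite[V.3.2]{Bass} when proving Lemma~\ref{lemma:sympquotient}, so your write-up dovetails well with how the reference is used elsewhere.
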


Before defining the symplectic stable range condition, we recall the definition of the symplectic group $\Sp_{2n}(R)$, and describe a set of elementary symplectic matrices which play a role analogous to that of elementary matrices in $\GL$.  In the formalism of symplectic algebraic K-theory, it is absolutely standard to assume that $R$ is commutative.
\begin{definition*} Let $R$ be a commutative ring.  Suppose that $x=(x_{1},...,x_{2n})$ and $y=(y_{1},...,y_{2n})$ are vectors in $R^{2n}$.  We say that the symplectic product $\la x,y \ra$ is $x_{1}y_{n+1}+x_{2}y_{n+2}+...+x_{n}y_{2n}- x_{n+1}y_{1}-x_{n+2}y_{2}-...-x_{2n}y_{n}$.
\end{definition*}
The symplectic group $\Sp_{2n}(R)$ is the subgroup of $\GL_{2n}(R)$ which preserves the symplectic form, i.e., $\Sp_{2n}(R)$ is the set of $T\in \GL_{2n}(R)$ such that for all $x,y\in R^{2n}$, we have $\la Tx, Ty \ra = \la x , y \ra$.  Now we will define elementary symplectic matrices.  Let $e_{ij}(r)$ denote the $n\times n$ matrix with all entries $0$ except that entry $i,j$ is equal to $r$.  Let $E_{ij}(r)=\Id_{n}+e_{ij}(r)$.
\begin{definition*}
The following types of matrices are called elementary symplectic matrices (in all cases $i\neq j$):
\begin{align*}
A_{ij}(r)&=
  \begin{bmatrix}
    E_{ij}(r) & 0           \\
    0         & E_{ji}(-r)  \\
  \end{bmatrix} \\
B_{ij}(r)&=
  \begin{bmatrix}
    Id_{n}    & e_{ij}(r)+e_{ji}(r)\\
    0         & Id_{n}             \\
  \end{bmatrix} \\
B_{ii}(r)&=
  \begin{bmatrix}
    Id_{n}    & e_{ii}(r)\\
    0         & Id_{n}             \\
  \end{bmatrix}\\
C_{ij}(r)&=
  \begin{bmatrix}
    Id_{n}                      & 0     \\
    e_{ij}(r)+e_{ji}(r)         & Id_{n}\\
  \end{bmatrix} \\
C_{ii}(r)&=
  \begin{bmatrix}
    Id_{n}                      & 0     \\
    e_{ii}(r)                   & Id_{n}\\
  \end{bmatrix}\\
\end{align*}

The group generated by these types of matrices is called the {\em elementary symplectic group} $\ESp_{2n}(R)$.
\end{definition*}

We can now define the symplectic stable range condition $SpSR_{k}$.  Competing definitions exist, and since we will need a theorem of Mirzaii and van der Kallen\cite{MVDK}, we will use theirs.
\begin{definition*}
A commutative ring $R$ is said to satisfy the symplectic stable range condition $SpSR_{k}$ if it satisfies $SR_{k}$ and $\ESp_{2(k+1)}(R)$ acts transitively on the unimodular vectors of $R^{2(k+1)}$.  This is a special case of the unitary stable range condition of \cite[6.3]{MVDK} (as explained in \cite[Example 6.1]{MVDK}).
\end{definition*}
Importantly, any commutative noetherian ring of Krull dimension $k-1$ satisfies $SpSR_{k}$ \cite[6.5]{MVDK}.
\begin{lemma}
\label{lemma:sympquotient}
If $R$ satisfies $SpSR_{k}$, so does $R/\aaa$.
\end{lemma}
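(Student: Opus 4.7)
The plan is to verify each of the two clauses in the definition of $SpSR_{k}$ for $R/\aaa$ by lifting from $R$. The first clause, that $R/\aaa$ satisfies $SR_{k}$, is immediate from Lemma \ref{lemma:quotient}. The substantive task is to prove that $\ESp_{2(k+1)}(R/\aaa)$ acts transitively on the unimodular vectors of $(R/\aaa)^{2(k+1)}$, and for this it suffices to show that every such vector lies in the $\ESp_{2(k+1)}(R/\aaa)$-orbit of the first standard basis vector $\bar{e}_{1}$.

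The first step would be to show that every unimodular $\bar{v}\in (R/\aaa)^{2(k+1)}$ admits a unimodular lift $v\in R^{2(k+1)}$. Picking arbitrary lifts $v_{i}\in R$ of the entries $\bar{v}_{i}$ together with elements $a_{i}\in R$ whose reductions witness the unimodularity of $\bar{v}$, I set $\beta=1-\sum_{i}a_{i}v_{i}$, which lies in $\aaa$. Then $(v_{1},\ldots,v_{2(k+1)},\beta)\in R^{2k+3}$ is a unimodular vector of length well above $k+1$, so the general-length shortening form of $SR_{k}$ (a standard consequence of Bass's definition) furnishes $b_{i}\in R$ such that the shortened vector with entries $v_{i}+b_{i}\beta$ is unimodular in $R^{2(k+1)}$; since $\beta\in\aaa$, this lift still reduces to $\bar{v}$ modulo $\aaa$.

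Next, I would invoke the hypothesis $SpSR_{k}$ for $R$ itself to obtain some $g\in \ESp_{2(k+1)}(R)$ with $gv=e_{1}$. Reducing $g$ modulo $\aaa$ makes sense because each elementary symplectic generator $A_{ij}(r)$, $B_{ij}(r)$, $B_{ii}(r)$, $C_{ij}(r)$, $C_{ii}(r)$ is visibly carried by the ring projection to a generator of the same type in $\ESp_{2(k+1)}(R/\aaa)$; thus the image $\bar{g}$ lies in $\ESp_{2(k+1)}(R/\aaa)$ and satisfies $\bar{g}\bar{v}=\bar{e}_{1}$, completing the proof.

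The only point requiring care is the lifting step, and even there the work is mostly bookkeeping: once one has the version of $SR_{k}$ that allows shortening unimodular vectors of arbitrary length $>k$, the rest of the argument is essentially formal, so I do not anticipate a genuine obstacle.
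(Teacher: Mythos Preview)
Your proposal is correct and follows essentially the same route as the paper's proof: lift a unimodular vector of $(R/\aaa)^{2(k+1)}$ to a unimodular vector of $R^{2(k+1)}$ (the paper simply cites Bass \cite[V.3.2]{Bass} for this, whereas you spell out the shortening argument), then use that $\ESp_{2(k+1)}(R)\to\ESp_{2(k+1)}(R/\aaa)$ is surjective to push the transitive action down. The $SR_{k}$ clause is handled identically via Lemma~\ref{lemma:quotient}.
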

\begin{proof}
The proof of \cite[V.3.2]{Bass} shows that a unimodular vector of  $(R/\aaa)^{2(n+1)}$ lifts to a unimodular vector of $R^{2(n+1)}$.  The lemma thus follows immediately when we note that $\ESp_{2(n+1)}(R)\rightarrow \ESp_{2(n+1)}(R/\aaa)$ is surjective.
\end{proof}

Recall again the definition of a symplectic congruence subgroup.
\begin{definition*}
Suppose that $\aaa$ is an ideal of $R$.  The symplectic congruence subgroup $\Sp_{2n}(R,\aaa)$ is the kernel of the natural map $\Sp_{2n}(R)\rightarrow \Sp_{2n}(R/\aaa)$.
\end{definition*}

Mirzaii and van der Kallen proved a homological stability theorem for $\Sp_{2n}(R)$ when $R$ satisfies a symplectic stable range condition, which paves the way for us to apply theorem \ref{theorem:spec} with $\Gm_{n}=\Sp_{2n}(R,\aaa)$.  In order to do so, we will need a simplicial complex $X_{n}$ acted on by $\Sp_{2n}(R)$ (we will actually take $G_{n}\supset \Gm_{n}$ to be a subgroup of $\Sp_{2n}(R)$).
\begin{definition*}
A partial symplectic basis of $R^{2n}$ is a sequence of pairs of vectors $(a_{1},b_{1}),(a_{2},b_{2}),...,(a_{p},b_{p})$ where $a_{i},b_{i}\in R^{2n}$, and we have that, for all $i,j$ the relations $\la a_{i},a_{j} \ra =0$ and $\la b_{i},b_{j}\ra =0$ hold, and $\la a_{i},b_{j} \ra = \delta^{i}_{j}$.  The {\em poset of partial symplectic bases} $\HU(R^{2n})$ is the poset whose elements are partial symplectic bases.  Order is given by $\sigma<\tau$ whenever $\sigma$ is a subsequence of $\tau$.  The notation $\HU$ arises from the fact that this is a special case of the complex of hyperbolic bases as considered in \cite[\S 7]{MVDK}.

An unordered partial symplectic basis of $R^{2n}$ is a set of pairs of vectors $\{(a_{1},b_{1}),(a_{2},b_{2}),...,(a_{p},b_{p})\}$ given by forgetting the order of a partial symplectic basis.  The {\em complex of unordered partial symplectic bases} $\THU(R^{2n})$ is the simplicial complex whose $p$-simplices correspond to partial symplectic bases of length $p+1$.  Incidence is given by saying that a simplex $\sigma$ is the face of a simplex $\tau$ whenever $\sigma$ is a subsequence of $\tau$.
\end{definition*}

There is an obvious action of $\Sp_{2n}(R)$ on $\THU(R^{2n})$.  Although we will apply our theorem to $\THU$, it is often more convenient to work with $\HU$.  The following lemma will be useful in switching between the two.
\begin{lemma}
\label{lemma:THU}
If $\RH_{q}(\HU(R^{n});M)=0$ for $q=0,\ldots,m$, then
$$\RH_{q}(\THU(R^{n});M)=0$$ for $q=0,\ldots,m$.
\end{lemma}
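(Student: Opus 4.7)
The plan is to exhibit an explicit section of the natural forgetful map from ordered to unordered bases, yielding a split injection on reduced homology.

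First I would introduce the face poset $\mathcal{P}$ of $\THU(R^{n})$; equivalently, $\mathcal{P}$ is the poset of \emph{unordered} partial symplectic bases ordered by subset inclusion. Its order complex is canonically the barycentric subdivision of $\THU(R^{n})$, so that $\RH_{q}(\THU(R^{n});M) \cong \RH_{q}(|\mathcal{P}|;M)$. It therefore suffices to show $\RH_{q}(|\mathcal{P}|;M) = 0$ for $q \leq m$.

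Next I would consider the forgetful poset map $\pi\colon \HU(R^{n}) \to \mathcal{P}$ sending an ordered basis to its underlying set of pairs, and build a section $s$ as follows: using the well-ordering principle, fix once and for all a total order $\prec$ on the set of hyperbolic pairs in $R^{n}$, and define $s\colon \mathcal{P} \to \HU(R^{n})$ by sending $\{v_{1},\ldots,v_{k}\}$ to its $\prec$-increasing enumeration.

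The only point that needs verification is that $s$ respects the poset structure: if $\bar\sigma \subseteq \bar\sigma'$, then the $\prec$-ordering of $\bar\sigma$ is by definition the restriction of the $\prec$-ordering of $\bar\sigma'$, so $s(\bar\sigma)$ is a subsequence of $s(\bar\sigma')$. Together with the evident equality $\pi \circ s = \Id_{\mathcal{P}}$, this gives $|\pi| \circ |s| = \Id_{|\mathcal{P}|}$ on order complexes, so $|s|_{\ast}$ realizes $\RH_{\ast}(|\mathcal{P}|;M)$ as a direct summand of $\RH_{\ast}(\HU(R^{n});M)$. The hypothesis then immediately forces $\RH_{q}(|\mathcal{P}|;M) = 0$ for $q \leq m$. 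I do not foresee a serious obstacle: the whole argument is built around the observation that a single global total order on hyperbolic pairs induces compatible orderings on every unordered partial basis at once, so that the canonical section exists and the splitting lemma does the rest.
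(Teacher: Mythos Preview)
Your proposal is correct and follows essentially the same approach as the paper: both fix a global total order on hyperbolic pairs to build an order-preserving section of the forgetful map, then use the resulting retraction to deduce that the reduced homology of the unordered complex injects into (hence vanishes with) that of the ordered one. Your explicit passage through the face poset and barycentric subdivision is a bit more careful than the paper, which simply treats $\THU$ as a poset, but the argument is the same.
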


\begin{proof}
There is a natural map (of posets) $g:\HU(R^{n})\rightarrow \THU(R^{n})$ given by forgetting the order of the elements in a sequence.  As in \cite[Lemma 4.1]{Putman3}, we observe that this map has a right inverse $f:\THU(R^{n})\rightarrow \HU(R^{n})$ given as follows: take any total order $\prec$ on the elements of $R^{n}\times R^{n}$.  Then $f$ maps a simplex $\sigma^{p}=\{(x_{1},y_{1}),\ldots,(x_{p+1},y_{p+1})\}\in \THU$ to
$$((x_{i_{1}},y_{i_{1}}),\ldots,(x_{i_{p+1}},y_{i_{p+1}})),$$
where
$$(x_{i_{1}},y_{i_{1}})\prec(x_{i_{2}},y_{i_{2}})\prec\ldots\prec (x_{i_{p+1}},y_{i_{p+1}}).$$  The map $f$ is easily seen to be order preserving.  Since $g\circ f$ is the identity, $g$ induces a surjection on homology.
\end{proof}

We now state our main theorem.  Again, we stress that the coefficient field $M$ in consideration is untwisted.
\begin{theorem}
\label{theorem:symp}
Suppose that $R$ is a ring satisfying the symplectic stable range condition $SpSR_{k}$, and $\aaa$ a proper ideal of $R$.  Suppose further that the inclusion of groups $\aaa\subset R$ induces an isomorphism $\HH_{\ast}(\aaa;M)\xrightarrow{\cong}\HH_{\ast}(R;M)$ and that $M$ is a field. Then for $n\geq 2m+k+4$, the map
$$\HH_{m}(\Sp_{2(n-1)}(R,\aaa);M)\rightarrow \HH_{m}(\Sp_{2n}(R,\aaa);M)$$
is a surjection, and for $n\geq 2m+k+5$, this map is an isomorphism.
\end{theorem}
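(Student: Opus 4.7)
The plan is to apply the generic stability Theorem~\ref{theorem:spec} with $G_n := \Sp_{2n}(R)$, $\Gm_n := \Sp_{2n}(R,\aaa)$, $X_n := \THU(R^{2n})$, and $d := k+3$. Then $2m+d+1 = 2m+k+4$ and $2m+d+2 = 2m+k+5$ match the surjection and isomorphism thresholds in the statement, so it suffices to verify the four hypotheses of Theorem~\ref{theorem:spec} in this range.

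Hypotheses~1 and~2 come from symplectic linear algebra. For the canonical $p$-cell $\sz^p$ I choose the partial symplectic basis $\{(e_i,e_{n+i})\}_{i=1}^{p+1}$; its stabilizer in $\Sp_{2n}(R)$ is the copy of $\Sp_{2(n-p-1)}(R)$ acting on the symplectic complement of the span of those vectors, so $\Stab_{\Gm_n}\sz^p = \Gm_{n-p-1}$ and this subgroup tautologically fixes each hyperbolic pair of $\sz^p$. Transitivity (hypothesis~1) follows from the Witt-type extension result of~\cite[\S 7]{MVDK}, valid under $SpSR_k$: any partial symplectic basis of length $p+1 \leq n-k-2$ can be carried to $\sz^p$ by an element of $\ESp_{2n}(R) \subset G_n$.

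Hypothesis~4 has two parts. Acyclicity of $X_n$ in the required range is the main connectivity theorem of~\cite{MVDK} for $\HU(R^{2n})$, transferred to $\THU(R^{2n})$ via Lemma~\ref{lemma:THU}. For the quotient $X_n/\Gm_n$, I plan to identify it, as a simplicial set, with $\THU((R/\aaa)^{2n})$: two partial symplectic bases of $R^{2n}$ reducing to the same basis modulo $\aaa$ differ by an element of $\Gm_n$, by a lifting argument for symplectic transformations that uses $SpSR_k$ and mirrors the proof of Lemma~\ref{lemma:sympquotient}. Since $R/\aaa$ also satisfies $SpSR_k$ by that lemma, a second application of the Mirzaii--Van der Kallen connectivity bound furnishes the required acyclicity of $X_n/\Gm_n$.

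The main obstacle is hypothesis~3: showing that $G_n = \Sp_{2n}(R)$ acts trivially, by conjugation, on the image of $\HH_p(\Gm_{n-1}) \to \HH_p(\Gm_n)$. Inner automorphisms by $\Gm_n$ act trivially, so the action factors through $\Sp_{2n}(R/\aaa)$, and this is where the hypothesis $\HH_*(\aaa;M)\xrightarrow{\cong}\HH_*(R;M)$ must enter in an essential way. My strategy, in the spirit of~\cite[Thm.~5.2]{Charney} and~\cite[Lemma~5.11]{Putman}, is to resolve $\Gm_{n-1}$ so that its low-dimensional homology is expressed in terms of abelian group homology of $\aaa$; the hypothesis then allows replacement of $\aaa$ by $R$ in the relevant range, reducing the outer action on stabilized classes to an action of $\Sp_{2n}(R/\aaa)$ on homology built entirely out of $R$ with trivial $R$-action, which must be trivial because the coefficients $M$ are untwisted. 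Verifying that this identification is $\Sp_{2n}(R)$-equivariant, and that the induced action on the resulting module is trivial for the reasons just sketched, is the technical heart of the argument.
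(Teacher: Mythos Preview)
Your overall setup is right, and hypotheses~1, 2, and the $X_n$ half of~4 go through essentially as you describe.  The genuine gap is hypothesis~3, and it is tied to your choice of $G_n$.

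You take $G_n=\Sp_{2n}(R)$.  With that choice, hypothesis~3 asks you to show that the \emph{entire} symplectic group acts trivially on the stabilized image, and your sketch for this---``resolve $\Gm_{n-1}$ so that its low-dimensional homology is expressed in terms of abelian group homology of $\aaa$, then replace $\aaa$ by $R$''---does not correspond to any workable mechanism.  The homology of $\Sp_{2(n-1)}(R,\aaa)$ is not built out of $\HH_\ast(\aaa)$ in any functorial way that would make such a replacement meaningful, and even if one could arrange something like this, there is no reason the resulting $\Sp_{2n}(R/\aaa)$-action on a module ``built out of $R$'' should be trivial: conjugation on, say, $\ssp_{2(n-1)}$ is certainly not.

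The paper avoids this by choosing the smaller group $G_n=\langle\ESp_{2n}(R),\Gm_n\rangle$.  Hypothesis~1 still holds because already $\ESp_{2n}(R)$ is transitive on $p$-cells (\cite[7.1]{MVDK}), and now hypothesis~3 only demands that \emph{elementary} symplectic matrices act trivially on the image (inner automorphisms by $\Gm_n$ being automatically trivial).  This is proved by a concrete parabolic-subgroup trick: factor $\Gm_{n-1}\hookrightarrow\Gm_n$ through the stabilizer $G\subset\Gm_n$ of $e_1$, and compare $G$ to the larger group $\hat G$ obtained by allowing the unipotent radical entries to lie in $R$ rather than $\aaa$.  The elementary generators $A_{1j}(r),B_{1j}(r)$ lie in $\hat G$, hence act trivially on $\HH_\ast(\hat G)$.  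The hypothesis $\HH_\ast(\aaa;M)\cong\HH_\ast(R;M)$ enters exactly here, to show $\HH_\ast(G)\to\HH_\ast(\hat G)$ is an isomorphism via a comparison of Hochschild--Serre spectral sequences (the unipotent radicals are extensions of $\aaa^{2(n-1)}$ by $\aaa$, resp.\ $R^{2(n-1)}$ by $R$).  This is where the assumption is actually consumed, not in any global resolution of $\Gm_{n-1}$.

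A smaller point: for the quotient half of hypothesis~4, identifying $(X_n/\Gm_n)^{(n-d)}$ with $\THU((R/\aaa)^{2n})^{(n-d)}$ requires more than lifting unimodular vectors.  One must show that if a partial basis is congruent to the standard one mod~$\aaa$, it is in the $\Gm_n$-orbit of the standard one; the paper uses injective stability for $K_1\Sp$ (\cite[9.1.11]{HOM}) over $R/\aaa$ to arrange that the correcting element lies in $\ESp_{2(n-p-1)}(R/\aaa)$, which then lifts.  Your phrase ``mirrors the proof of Lemma~\ref{lemma:sympquotient}'' undersells this step.
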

\begin{proof}
From here on, $\HH_{\ast}(X)$ will mean $\HH_{\ast}(X;M)$.  If $\aaa=R$, then the theorem follows by applying \cite[Theorem 8.2]{MVDK} in the symplectic case \cite[Example 6.1]{MVDK}.  Hence we can assume $\aaa\neq R$.

We will apply Theorem \ref{theorem:spec} with
\begin{itemize}
\item $G_{n}=\langle \ESp_{2n}(R), \Sp_{2n}(R,\aaa)\rangle\subset\Sp_{2n}(R)$
\item $\Gm_{n}=\Sp_{2n}(R,\aaa)$
\item $X_{n}=\HU(R^{2n})$, $d=k+3$.
\end{itemize}
(We take the map $G_{n}\rightarrow G_{n+1}$ to be lower-right inclusion).  We must verify that all the assumptions are valid.

\begin{assumptionX}
$G_{n}$ acts transitively on $p$-simplices of $X_{n}$ for $p<n-d$.
\end{assumptionX}
\cite[Lemma 7.1]{MVDK} proves that $\ESp_{2n}(R)$ acts transitively on $p$-simplices of $\HU(R^{2n})$ for $p<n-d$, which suffices to prove the assumption.

\begin{assumptionX}
For all $p=0,\ldots,n$, there is a standard $p$-simplex $\sigma_{0}^{p}$ in $X_{n}$ such that, for $n>p+d$, $\Stab_{\Gm_{n}}\sigma_{0}^{p}=\Gm_{n-p-1}$, and furthermore, $\Stab_{\Gm_{n}}\sigma_{0}^{p}$ fixes the vertices of $\sigma_{0}^{p}$.  Additionally, $\Stab_{G_{n}}(\sigma_{0}^{0})=G_{n-1}$ for $n>d+1$
\end{assumptionX}
The standard $p$-cell $\sigma_{0}^{p}$ is the standard hyperbolic partial basis:
$$\sigma_{0}^{p}=\{(e_{1},e_{n+1}),(e_{2},e_{n+2}),\ldots (e_{p+1},e_{n+p+1})\}$$
Since $\aaa\neq R$, a matrix in $\Sp_{2n}(R,\aaa)$ which permutes $e_{1},\ldots, e_{p+1}$ must fix these vectors (as the identity is the only permutation matrix with trivial image in $\Sp_{2n}(R/\aaa)$).  We thus have that the stabilizer of $\sigma_{0}^{p}$ in $\Sp_{2n}(R,\aaa)$ consists of all matrices of the form:
\[ \begin{bmatrix} \Id_{p+1} & 0 & 0 & 0 \\ 0 & \ast & 0 & \ast \\ 0 & 0 & \Id_{p+1} & 0\\ 0 & \ast & 0 & \ast\\ \end{bmatrix}\in  \Sp_{2n}(R,\aaa)\]
which is clearly just (the lower-right-included image of) $\Sp_{2(n-p-1)}(R,\aaa)$, as desired.  

Finally, let $S=\Stab_{G_{n}}(\sigma_{0}^{0})$.  As above, we see that $S=G_{n}\cap\Sp_{2(n-1)}(R)$.  By definition, $S\supset G_{n-1}$.  On the other hand, it is easy to see that
$$\langle S\cup \ESp(2n)(R)\rangle \subset G_{n}=
\langle G_{n-1}\cup \ESp(2n)(R)\rangle$$
Hence, by injective stability of symplectic $K_{1}$ (see \cite[Corollary 3.2]{stein}\cite{HOM}), we get the reverse inclusion $S\subset G_{n-1}$, and thus $S=G_{n-1}$ as desired.

\begin{assumptionX}
The group $G_{n}$ acts trivially on the image of the map $\HH_{p}(\Gm_{n-1})\rightarrow \HH_{p}(\Gm_{n})$ induced by (lower-right) inclusion $\Gm_{n-1}\rightarrow \Gm_{n}$ for $n>d$ and $p\geq 0$.
\end{assumptionX}
It suffices to show that elementary symplectic matrices act trivially on the image $H$ of $\HH_{p}(\Sp_{2(n-1)}(R,\aaa))\rightarrow \HH_{p}(\Sp_{2n}(R,\aaa))$ for $n>d$.  We first prove a small lemma:
\begin{lemma}
The group $\ESp_{2n}(R)$ is generated by the collection of all matrices of the following types.
\begin{itemize}
\item $A_{1j}(r)$ (with $j=2,\ldots n$)
\item $A_{i1}(r)$ (with $i=2,\ldots,n$)
\item $B_{1j}(r)$ (with $j=1,\ldots n$)
\item $C_{i1}(r)$ (with $i=1,\ldots n$)
\end{itemize}
\end{lemma}
\begin{proof}
For alternating permutations $\pi\in A_{n}$, let $T_{\pi}\in \GL_{n}(R)$ be such that $T_{\pi}e_{i}=e_{\pi(i)}$.  (Because $\pi$ is alternating, $(T_{\pi}^{-1})^{T}=T_{\pi}$).  Let $E_{ij}$ denote $E_{ij}(1)$.  It is well known that the elementary matrices $E_{ij}$ generate $\SL_{n}(\Z)$, and the map $\SL_{n}(\Z)\rightarrow \GL_{n}(R)$ preserves both the alternating permutation matrices and the $E_{ij}$, so we conclude that the matrix $T_{\pi}\in \GL_{n}(R)$ is a product of matrices of the form $E_{ij}$.  But all the elementary matrices in $\GL_{n}(R)$ are products of matrices of the forms $E_{i1}(r)$ and $E_{1j}(r)$, (for instance, $E_{13}(r)=[E_{12},E_{23}(r)]$,) so it follows that the matrix:
\[
S_{\pi}=
\begin{bmatrix}
T_{\pi} & 0       \\
0       & T_{\pi} \\
\end{bmatrix}
\]
can be generated by matrices of the forms $A_{1j}(1)$ and $A_{i1}(1)$.  But then by conjugating matrices of types $A_{1j}(r), A_{i1}(r), B_{1j}(r)$ and $C_{1j}(r)$ by the $S_{\pi}$, we can generate every elementary symplectic matrix.  For instance $B_{23}(r)=S_{(123)}B_{12}(r)S_{(123)}^{-1}$.
\end{proof}
Now we will show that matrices of types $A_{1j}(r)$ and $B_{1j}(r)$ act trivially on $H$.  The case for $A_{i1}(r)$ and $C_{i1}(r)$ is similar.  Let:
\[
G=\left\{
\begin{bmatrix}
1 & \ast & \ast & \ast\\
0 & A    & \ast & B   \\
0 & 0    & 1    & 0   \\
0 & C    & \ast & D   \\
\end{bmatrix}
\in \Sp_{2n}(R,\aaa)
\right\}
\]
\[
\hG=\left\{
\begin{bmatrix}
1 & \ast & \ast & \ast\\
0 & A    & \ast & B   \\
0 & 0    & 1    & 0   \\
0 & C    & \ast & D   \\
\end{bmatrix}
\in \Sp_{2n}(R):
\begin{bmatrix} A & B\\ C & D\\ \end{bmatrix} \in \Sp_{2(n-1)}(R,\aaa)
\right\}.
\]\\
(The reader may wish to think of $G$ as the subgroup of $\Sp_{2n}(R,\aaa)$ which preserves the basis vector $e_{1}$.  Let $P$ be the subgroup of $\Sp_{2n}(R)$ which preserves $e_{1}$, and observe that there is a natural map $P\rightarrow \Sp_{2(n-1)}(R)$ given by letting $T\in P$ act on $v$ in the span of $\{e_{i}\}_{i\neq 1,n+1}$ then throwing away the $e_{1}$ and $e_{n+1}$ components. The preimage of $\Sp_{2(n-1)}(R,\aaa)$ under this map is $\hG$). 

We have that $A_{1j}(r),B_{1j}(r)\in \hG$, so these matrices act trivially on $\HH_{p}(\hG)$.  On the other hand $\Sp_{2(n-1)}(R,\aaa)\subset G \subset \Sp_{2n}(R,\aaa)$, so it suffices to show that $A_{1j}(r),B_{1j}(r)$ act trivially on $\HH_{p}(G)$.  This will follow once we show that $\HH_{p}(G)\rightarrow \HH_{p}(\hG)$ is an isomorphism.  Let $K$ be the kernel of the natural map $G\rightarrow \Sp_{2(n-1)}$ described above.  (In coordinates, this map is given by forgetting rows $1,n+1$ and columns $1,n+1$).  Similarly, let $\hat{K}=\Ker(\hG\rightarrow\Sp_{2(n-1)})$.  Then we have a commutative diagram:
$$\begin{CD}
1     @>>>    K  @>>> G   @>>> \Sp_{2(n-1)} @>>> 1    \\
@.         @VVV      @VVV      @|                @.   \\
1     @>>>\hat{K}@>>> \hG @>>> \Sp_{2(n-1)} @>>> 1    \\
\end{CD}$$

In order to show that the middle downward arrow induces an isomorphism on homology, we will need the following lemma, together with the fact (to be proved shortly) that $K\rightarrow \hat{K}$ induces an isomorphism on homology with $M$ coefficients.  Recall that $M$ is a field, and $\HH_{\ast}(\cdot)$ refers to homology with coefficients in $M$ unless otherwise specified.

\begin{lemma}
\label{lemma:diagram}
Suppose that we have a diagram of groups, with exact rows,
$$\begin{CD}
1    @>>> N_{1}   @>>> G_{1}    @>>> Q_{1}   @>>> 1 \\
@.        @VVV         @VVV      @VVV       @.\\
1    @>>> N_{2}   @>>> G_{2}    @>>> Q_{2}   @>>> 1 \\
\end{CD}$$
Suppose further that $N_{1}\rightarrow N_{2}$ induces an isomorphism on homology, and either condition (i) or (ii) below holds.
\begin{itemize}
\item[(i)] $Q_{1}=Q_{2}$;
\item[(ii)] for all $p$, the map $Q_{1}\rightarrow Q_{2}$ induces an isomorphism
$$\HH_{p}(Q_{1})\rightarrow H_{p}(Q_{2})$$
and $Q_{i}$ acts trivially on $\HH_{p}(N_{i})$.
\end{itemize}
Then we have that
$$\HH_{p}(G_{1})\rightarrow \HH_{p}(G_{2})$$
is an isomorphism.
\end{lemma}
\begin{proof}
Given a short exact sequence of groups $1\rightarrow N \rightarrow G \rightarrow Q$, there is a spectral sequence called the Hochschild-Serre spectral sequence \cite[VII.6]{Brown} with $E^{2}_{pq}=\HH_{p}(Q;\HH_{q}(N))$ and $E^{\infty}_{pq}\Rightarrow \HH_{p+q}(G)$.  This spectral sequence is natural, in the sense that our diagram induces a map from the spectral sequence associated to its top row to the spectral sequence associated to its bottom row.  On $E^{2}_{pq}$ this takes the form of the natural map
$$\HH_{p}(Q_{1};\HH_{q}(N_{1}))\rightarrow \HH_{p}(Q_{2};\HH_{q}(N_{2})).$$
If (i) holds, then this map is an isomorphism, and hence the spectral sequences have isomorphic abutments, i.e.,
$$\HH_{p}(G_{1})\cong \HH_{p}(G_{2})$$
as desired.  On the other hand, if (ii) holds, then we reason as follows.  Since $M$ is a field, we can use the universal coefficient theorem to write the map on the $E^{2}$ page as
$$\HH_{p}(Q_{1})\otimes \HH_{q}(N_{1})\rightarrow \HH_{p}(Q_{2})\otimes \HH_{q}(N_{2}).$$
Since this map is an isomorphism, we see that the map of abutments is an isomorphism, i.e., $\HH_{p}(G_{1})\cong \HH_{p}(G_{2})$.
\end{proof}

By the lemma, we will be done if we can show that $K\rightarrow \hat{K}$ induces an isomorphism on homology.  But this follows from applying the lemma to the commutative diagram:
$$\begin{CD}
1     @>>> \aaa   @>>> K     @>>> \aaa^{2(n-1)} @>>> 1    \\
@.         @VVV      @VVV        @|                @.   \\
1     @>>> R     @>>>\hat{K}@>>> R^{2(n-1)}   @>>> 1    \\
\end{CD}$$
(Observe that condition (ii) holds here).

\begin{assumptionX}
The spaces $X_{n}$ and $X_{n}/\Gm_{n}$ are $\frac{n-d}{2}$ connected.
\end{assumptionX}
\cite[Lemma 7.4]{MVDK} shows that $\HU(R^{2n})$ is $\frac{n-k-3}{2}$ connected for $R$ satisfying $SpSR_{k}$.  So it suffices to prove that the $n-d$-skeleton $(\THU(R^{2n})/\Gm_{n})^{(n-d)}=\THU((R/\aaa)^{2n})^{(n-d)}$.  We want to show that $\Sp_{2n}(R,\aaa)$ orbits of unordered partial hyperbolic bases of $R^{2n}$ correspond to $\aaa$-congruence classes of unordered partial hyperbolic bases of $R^{2n}$.  That is, given $\sigma^{p},\tau^{p}$ partial hyperbolic bases of length $p+1$, we want that:
$$\sigma^{p}\equiv \tau^{p} \Mod \aaa \iff \sigma^{p}\in \Sp_{2n}(R,\aaa)\tau^{p}$$

The $\Leftarrow$ implication is trivial, so we now prove the $\Rightarrow$ implication.  It suffices to prove this for ordered, rather than unordered, partial hyperbolic bases.  Let $\sigma_{0}^{p}$ denote the standard hyperbolic partial basis
$$((e_{1},e_{n+1}),(e_{2},e_{n+2}),\ldots (e_{p+1},e_{n+p+1})).$$
We must show that for any length $p+1$ partial hyperbolic basis $\tau^{p}=((x_{1},y_{1}),\ldots,(x_{p+1},y_{p+1}))\equiv \sz^{p} \Mod \aaa$, there exists $T\in \Sp_{2n}(R,\aaa)$ such that $T\sigma_{0}^{p}=\tau^{p}$.  As we observed before, \cite{MVDK} proves that there is some $S\in \ESp_{2n}(R)$ such that $S\sz^{p}=\tau^{p}$ and thus $S$ has the form
\[ \begin{bmatrix} x_{1} & \ldots & x_{p+1} & \ast & y_{1} & \ldots & y_{p+1} & \ast\\ \end{bmatrix} \]
Let $\bar{S}$ be the image of $S$ under $\Sp_{2n}(R)\rightarrow \Sp_{2n}(R/\aaa)$. Then $\bar{S}$ has the form:
\[
\begin{bmatrix}
\Id_{p+1} & 0 & 0          & 0\\
0         & A & 0          & B\\
0         & 0 & \Id_{p+1}  & 0\\
0         & C & 0          & D\\
\end{bmatrix}
\]
By \cite[9.1.11]{HOM}, the map:
$$\Sp_{2(n-p-1)}(R/\aaa)/\ESp_{2(n-p-1)}(R/\aaa)\rightarrow \Sp_{2n}(R/\aaa)/\ESp_{2n}(R/\aaa)$$
induced by lower-right inclusion is an injection since $n-p-1$ exceeds the stable range of $R/\aaa$.  So, since $\bar{S}\in \ESp_{2n}(R/\aaa)$, we conclude that $\begin{bmatrix} A & B\\ C & D \\ \end{bmatrix}\in \ESp_{2(n-p-1)}(R/\aaa)$.  But $\ESp_{2n}(R)\rightarrow \ESp_{2n}(R/\aaa)$ is surjective, so we can lift $\begin{bmatrix} A & B\\ C & D \\ \end{bmatrix}$ to some matrix $\tilde{S}\in \ESp_{2(n-p-1)}(R)\subset\ESp_{2n}(R)$  Let $T=S\tilde{S}^{-1}$.  Then clearly $T\in \Sp_{2n}(R,\aaa)$, and $T\sz^{p}=S\tilde{S}^{-1}\sz^{p}=S\sz^{p}=\tau^{p}$ as desired.
\begin{assumptionX}
There is some $g\in G_{n}$ which flips $\sigma^{1}_{0}$ and commutes with $\Gm_{n-2}$.
\end{assumptionX}
If $n=2$, then this is trivial, so assume $n\geq 3$. Let $\pi$ be the transposition $(1\quad 2)$ and (writing $A_{ij}$ for $A_{ij}(1)$ and $B_{ij}$ for $B_{ij}(1)$) take $g\in \ESp_{2n}(R)$ to be the matrix
\[
\begin{bmatrix}
T_{\pi} & 0 & 0          & 0\\
0         & \Id_{n-2} & 0 & 0\\
0         & 0 & T_{\pi}  & 0\\
0         & 0 & 0        & \Id_{n-2}\\
\end{bmatrix}
=A_{12}^{-1}A_{21}A_{12}^{-1}(B_{22}C_{22}^{-1}B_{22})^{2}
\]
Since $g$ commutes with $\Gm_{n-2}$ and flips $\sigma_{0}^{1}$, this completes the verification of the final assumption, and thus, the proof.
\end{proof}

\section{Borel Stability}
\label{section:BorelStab}
Using theorem \ref{theorem:symp} (and its proof) we will prove that, with appropriate qualifiers, the homology of a symplectic congruence subgroup does not depend on its level.

\begin{theorem}
\label{theorem:BorelSymp}
Suppose that $R$ is a commutative ring satisfying the symplectic stable range condition $SpSR_{k}$, and $\aaa$ an ideal of $R$, with $R/\aaa$ finite.  Then for $m\geq 1$ and $n\geq 2m+2k+4$, the map:
$$\HH_{m}(\Sp_{2n}(R,\aaa);\Q)\rightarrow \HH_{m}(\Sp_{2n}(R);\Q)$$
is an isomorphism.
\end{theorem}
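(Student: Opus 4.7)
The plan is to apply Theorem \ref{theorem:symp} with $M=\Q$ and then exploit the finiteness of $R/\aaa$ via a Hochschild--Serre collapse to transfer the resulting stability into the comparison of the theorem.

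First, I verify that the hypothesis of Theorem \ref{theorem:symp} holds for $M=\Q$. The Hochschild--Serre spectral sequence for the short exact sequence of abelian groups $0\to\aaa\to R\to R/\aaa\to 0$ has $E^2_{p,q}=\HH_p(R/\aaa;\HH_q(\aaa;\Q))$. Since $R/\aaa$ is finite, $\HH_p(R/\aaa;V)$ vanishes for $p>0$ and every $\Q$-vector space $V$, and since $R$ is abelian, $R/\aaa$ acts trivially on $\aaa$. The sequence collapses onto the $q$-axis and yields $\HH_{\ast}(\aaa;\Q)\cong\HH_{\ast}(R;\Q)$, so Theorem \ref{theorem:symp} supplies homological stability for $\{\Sp_{2n}(R,\aaa)\}$ with rational coefficients.

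Second, let $Q_n := \Sp_{2n}(R)/\Sp_{2n}(R,\aaa)$, a subgroup of the finite group $\Sp_{2n}(R/\aaa)$ and hence finite. Applying Hochschild--Serre to $1\to\Sp_{2n}(R,\aaa)\to\Sp_{2n}(R)\to Q_n\to 1$ with $\Q$-coefficients, the spectral sequence again collapses onto the $q$-axis, producing
$$\HH_m(\Sp_{2n}(R);\Q)\cong\HH_m(\Sp_{2n}(R,\aaa);\Q)^{Q_n},$$
and the map of the theorem is identified with the natural projection onto $Q_n$-coinvariants (equal to invariants over $\Q$). It therefore suffices to show that $Q_n$ acts trivially on $\HH_m(\Sp_{2n}(R,\aaa);\Q)$.

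Third, the verification of Assumption 3 in the proof of Theorem \ref{theorem:symp} shows that $G_n=\langle\ESp_{2n}(R),\Sp_{2n}(R,\aaa)\rangle$ acts trivially on the image of the stabilization map $\HH_m(\Sp_{2(n-1)}(R,\aaa);\Q)\to\HH_m(\Sp_{2n}(R,\aaa);\Q)$. In the range $n\geq 2m+2k+4$, Theorem \ref{theorem:symp} makes this stabilization surjective, so $G_n/\Sp_{2n}(R,\aaa)\subseteq Q_n$ acts trivially on all of $\HH_m(\Sp_{2n}(R,\aaa);\Q)$. Lemma \ref{lemma:sympquotient} tells us $R/\aaa$ satisfies $SpSR_k$, and since $R/\aaa$ is finite (in particular semilocal), standard symplectic stability results yield $\Sp_{2n}(R/\aaa)=\ESp_{2n}(R/\aaa)$ in our range; combined with the surjectivity of $\ESp_{2n}(R)\to\ESp_{2n}(R/\aaa)$ used in Lemma \ref{lemma:sympquotient}, this forces $\Sp_{2n}(R)=G_n$ and therefore $Q_n=G_n/\Sp_{2n}(R,\aaa)$, completing the argument.

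The main obstacle is the K-theoretic identity $\Sp_{2n}(R/\aaa)=\ESp_{2n}(R/\aaa)$; the extra $k$ in the bound versus Theorem \ref{theorem:symp} reflects the need for the symplectic stable range of $R/\aaa$ on top of the stability bound itself. If this identity is not directly available at level $n$, one instead stabilizes to a level $N$ where it does hold, transports the triviality of the $Q_n$-action back to level $n$ via the homological stability isomorphism for $\{\Sp_{2n}(R,\aaa)\}$ just established, and concludes as above.
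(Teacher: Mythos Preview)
Your first two steps coincide with the paper's: both check the hypothesis of Theorem~\ref{theorem:symp} for $M=\Q$ via the vanishing of positive-degree rational homology of the finite group $R/\aaa$, and both reduce the theorem to showing that the conjugation action of $\Sp_{2n}(R)$ on $\HH_m(\Sp_{2n}(R,\aaa);\Q)$ is trivial (the paper uses coinvariants via \cite[III.10.4]{Brown}; your Hochschild--Serre collapse is the same thing).

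The routes diverge at the last step. The paper writes $\Sp_{2n}(R)=\langle\ESp_{2n}(R),\Sp_{2k}(R)\rangle$ using \cite[9.1.11(iv)]{HOM}, treats $\ESp_{2n}(R)$ exactly as you do, and then disposes of $\Sp_{2k}(R)$ by a commuting trick: its lower-right included copy centralizes $\Sp_{2(n-k)}(R,\aaa)$, so one needs the $k$-fold stabilization $\HH_m(\Sp_{2(n-k)}(R,\aaa);\Q)\to\HH_m(\Sp_{2n}(R,\aaa);\Q)$ to be surjective, and \emph{this} is where the extra $k$ in the bound $n\geq 2m+2k+4$ enters. You instead argue that $G_n=\Sp_{2n}(R)$ outright, via $\Sp_{2n}(R/\aaa)=\ESp_{2n}(R/\aaa)$. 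That identity does hold for finite commutative rings (they are finite products of local Artinian rings, and $\Sp_{2n}=\ESp_{2n}$ over any commutative local or semilocal ring), so your argument is valid---and in fact it only uses one step of surjective stabilization, hence proves the theorem already for $n\geq 2m+k+4$. Consequently your closing paragraph is misdirected: the extra $k$ in the stated bound is an artifact of the paper's commuting argument, not of any range requirement on $R/\aaa$, and your proposed fallback of ``stabilizing to a level $N$ where the identity holds'' is both unnecessary here and would not rescue the argument if $\Sp\neq\ESp$ held stably. The trade-off is that the paper's route stays within references already in use (\cite{HOM}~9.1.11), whereas yours needs a separate citation for $\Sp_{2n}=\ESp_{2n}$ over semilocal rings, which you have left as an appeal to ``standard symplectic stability results.''
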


\begin{proof} 
First, observe that because $\Sp_{2n}(R/\aaa)$ is finite, we have that:
$$[\Sp_{2n}(R):\Sp_{2n}(R,\aaa)]<\infty$$
Now we apply \cite[III.10.4]{Brown} to see that the rational homology of $\Sp_{2n}(R)$ is isomorphic to the $\Sp_{2n}(R)$-coinvariants of the rational homology of $\Sp_{2n}(R,\aaa)$.  In other words,
$$\HH_{\ast}(\Sp_{2n}(R);\Q)=\HH_{\ast}(\Sp_{2n}(R,\aaa);\Q)_{\Sp_{2n}(R)}$$
So, if the action of $\Sp_{2n}(R)$ on $\HH_{m}(\Sp_{2n}(R,\aaa);\Q)$ is trivial, then the desired result will follow.  Observe that \cite[Thm 9.1.11(iv)]{HOM} implies that
$$\Sp_{2n}(R)=\la \ESp_{2n}(R), \Sp_{2k}(R)\ra,$$
so we just need to show that $\ESp_{2n}(R)$ and $\Sp_{2k}(R)$ act trivially.
\paragraph{$\ESp_{2n}(R)$ acts trivially.}
Because $R/\aaa$ is finite, and $R$, as an additive group, is abelian, \cite[III.10.4]{Brown} implies that $\HH_{\ast}(\aaa;\Q)\rightarrow \HH_{\ast}(R;\Q)$ is an isomorphism, so we can apply Theorem \ref{theorem:symp} to see that $\HH_{m}(\Sp_{2(n-1)}(R,\aaa);\Q)\rightarrow\HH_{m}(\Sp_{2n}(R,\aaa);\Q)$ is a surjection.  On the other hand, our verification of assumption $3$ in our proof of Theorem \ref{theorem:symp} already showed that $\ESp_{2n}(R)$ acts trivially on the image of $\HH_{m}(\Sp_{2(n-1)}(R,\aaa);\Q)\rightarrow\HH_{m}(\Sp_{2n}(R,\aaa);\Q)$ for $n\geq k+3$, so we conclude that $\ESp_{2n}(R)$ acts trivially on $\HH_{m}(\Sp_{2n}(R,\aaa);\Q)$.
\paragraph{$\Sp_{2k}(R)$ acts trivially.}
Observe that the upper-left-included image of $\Sp_{2k}(R)$ in $\Sp_{2n}(R)$ is conjugate (by a matrix in $\ESp_{2n}(R)$) to the lower-right-included image.  The latter commutes with $\Sp_{2(n-k)}(R,\aaa)$, and hence acts trivially on
$\HH_{m}(\Sp_{2(n-k)}(R,\aaa);\Q)$.  But the map
$$\HH_{m}(\Sp_{2(n-k)}(R,\aaa);\Q)\rightarrow \HH_{m}(\Sp_{2n}(R,\aaa);\Q)$$
is surjective, hence $\Sp_{2k}(R)$ acts trivially on $\HH_{m}(\Sp_{2n}(R,\aaa);\Q)$ as desired.  (The idea behind this proof is due to Charney.  See \cite{Charney}[\S 5.1]).
\end{proof}

\noindent
David Bruce Cohen

Department of Mathematics

Rice University, MS 136

6100 Main St.

Houston, TX 77005

E-mail: {\tt dc17@rice.edu}
\end{document}